\documentclass[a4paper,draft,12pt]{article}
\pdfoutput=1
\usepackage{amssymb,amsmath}
\usepackage{amsthm}
\usepackage{tikz}
\iffalse
\usepackage{amsmath}
\usepackage{amssymb}
\usepackage{amscd}
\usepackage{amsthm}
\usepackage{enumerate}
%%% SUPPORT FOR SOME EXTRA PACKAGES
\ifdefined\directlua
\usepackage{breqn}
\usepackage{fontspec}
\usepackage{microtype}
\setmainfont[Ligatures=TeX]{Adobe Garamond Pro}
%\setmainfont[Ligatures=TeX]{Adobe Caslon Pro}
\usepackage{tikz}
\usetikzlibrary{matrix,arrows,decorations.pathmorphing}
\usepackage{tikz-cd}
\usepackage{geometry}
\geometry{a4paper}
\usepackage{enumerate}
\usepackage{amsrefs}
\else

\newenvironment{dmath*}{\begin{displaymath}}{\end{displaymath}}

\fi
\fi
% -- GENERAL DEF --
 \newtheorem{thm}{Theorem}[section]
 
 \newtheorem{lem}[thm]{Lemma}
 \newtheorem{prop}[thm]{Proposition}
 \theoremstyle{definition}
 
 \theoremstyle{remark}
 \newtheorem{rem}[thm]{Remark}
 
 \numberwithin{equation}{section}

\usepackage{mathtools}

\DeclarePairedDelimiter\floor{\lfloor}{\rfloor}

\def\text{\mbox}
%opening

\begin{document}

\author{S.Capparelli and A. Del Fra}
\title{Chebyshev coordinates and Salem numbers}
\maketitle
\begin{abstract}
By expressing polynomials in the basis of Chebyshev polynomials, certain families of hyperbolic polynomials appear naturally. Some of these families  have all their roots in the interval $[-2,2]$.
In many cases the span of the family of polynomials thus found is greater than 4, and we show that they are the minimal polynomials of Salem numbers, possibly multiplied by some cyclotomic polynomials.  In addition, we show how to compute the limit of the largest and smallest roots.
\end{abstract}

\section{Introduction}
According to a classical result of Kronecker's, \cite{KRO}, a set of conjugate algebraic integers lying on the unit circle $|z|=1$ must be roots of unity. The unit circle $|z|=1$ is then transformed into the segment $-2\leq x\leq 2$ by the transformation $x=z+\frac{1}{z}$. Thus Kronecker obtains that any algebraic integer which lies with its conjugates in the interval $[-2,2]$ must be of the form $x=2\cos \frac{2k\pi}{m}$. We call the corresponding polynomials {\it polynomials of cosine type} or also  {\it Kronecker polynomials}.

In \cite{ROB1} (see also \cite{ROB3}) R. Robinson writes that P\'olya and I. Schur, \cite{SCH}, showed that a real interval of length less than 4 can contain only a finite number of sets of conjugate algebraic integers, and then proceeds to prove that any real interval of length greater than 4 contains an infinite number of sets of conjugate algebraic integers. He writes: ``The problem remains unsolved for intervals of length exactly 4, except when the end-points are rational integers, in which case there are infinitely many sets''.

In a 1964 paper, \cite{ROB2}, Robinson classified all irreducible polynomials with integer coefficients having only real roots, such that the difference between the largest and the smallest root, (the {\it span}), is less than 4, for degrees up to and including 8. He chooses the representative polynomials of each type in such a way that the average of the roots lies in $[0,\frac 12]$.

Recently, in \cite{CDS1}, Robinson's classification was extended up to degree 14, moreover a list up to degree 17 was obtained and conjectured to be complete.

In \cite{FRW} it was proved that the list is exhaustive up to degree 15. Finally, in \cite{EMRS}, using ideas from linear programming, the authors seem to suggest that the list of polynomials of degree 16 and 17 found in previous papers is indeed complete, though no proof of this is yet available. Moreover, the four authors were able to exhibit three polynomials of the desired type of degree 18 and, in spite of similar computations being conducted, no such polynomials of degree 19 and 20 were found, that were not of cosine type.
A related research is the one carried out by J. McKee in \cite{JMC} which uses integer symmetric matrices.
In many of the cited works Chebyshev polynomials seem to play an important role and in many cases several
heuristic methods have been used to help pinpoint the elusive hyperbolic polynomials. Chebyshev polynomials were used also in recent work by the first author in \cite{CM1} and \cite{CM2}.
So we decided to choose Chebyshev polynomials as a basis to express the polynomials already classified.
 It turns out that when expressed in terms of Chebyshev polynomials often the polynomials in the list appear
 to have coefficients with small absolute value and with a certain regularity. Indeed, with hindsight, using these coordinates it is possible to find by brute force a good number of the Robinson's polynomials with little effort. Some polynomials in Robinson's list are very simple linear combinations of Chebyshev polynomials. By choosing only coefficients $1,-1,0$ one can describe 39 polynomials out of the 96 in Robinson's original list. For example, the polynomial $x^6-x^5-6x^4+5x^3+9x^2-6x-1$ has Chebyshev coordinates  $[1,-1,0,0,0,-1,1]$  and
 $x^8-8x^6-x^5+20x^4+4x^3-16x^2-3x+2$ has coordinates  $[0,-1,0,-1,0,-1,0,0,1]$. We also noticed that those polynomials which have all roots in the interval $[-2,2]$, i.e., those classified by Kronecker, often have these simple kind of coordinates.
 For example, $x^8-x^7-7x^6+6x^5+15x^4-10x^3-10x^2+4x+1$ has coordinates  $[1,-1,1,-1,1,-1,1,-1,1]$ and $x^8-8x^6+20x^4-16x^2+2$ has coordinates  $[0,0,0,0,0,0,0,0,1]$. Other regularities were observed as, for example, we almost always have alternating signs and a weak monotonicity, in absolute value. For example, $x^8-3x^7-5x^6+18x^5+7x^4-33x^3-3x^2+18x+1$ has coordinates $[7,-6,6,-6,5,-3,3,-3,1]$.
  A glance at the following table of Chebyshev coordinates of the degree 8 Robinson's polynomials ordered by increasing span will show the evident simplification and regularity afforded by the Chebyshev basis (the asterisk indicates a polynomial of cosine type):

  {\small
\begin{center}
    \begin{tabular}{ | l| l | l | l | l |}
    \hline
 No.&    Chebyshev Coordinates  \\
    \hline

    8a&  $[31,-30,27,-22,17,-11,7,-4,1]$ \\
    \hline
    8b*&  $[1,-1,1,-1,1,-1,1,-1,1]$ \\
    \hline
    8c&  $[29,-27,25,-20,16,-11,7,-4,1]$ \\
    \hline
    8d&  $[17,-16,15,-12,10,-7,4,-3,1]$ \\
    \hline
    8e* & $[0,0,0,0,0,0,0,0,1]$ \\
    \hline
    8f&  $[0,-1,0,-1,0,-1,0,0,1]$ \\
    \hline
    8g&  $[13,-12,12,-10,8,-6,4,-3,1]$ \\
    \hline
    8h& $[13,-13,12,-10,8,-6,4,-3,1]$ \\
    \hline
    8i&  $[5,-5,4,-4,3,-3,2,-2,1]$ \\
    \hline
    8j&  $[7,-6,6,-5,4,-4,2,-2,1]$ \\
    \hline
    8k* &  $[1,0,0,0,-1,0,0,0,1]$ \\
    \hline
    8l&  $[17,-16,14,-12,10,-7,6,-4,1]$ \\
    \hline
    8m&  $[7,-6,6,-6,5,-3,3,-3,1]$ \\
    \hline
    8n&  $[3,-3,3,-3,3,-3,2,-2,1]$ \\
    \hline
    8o&  $[1,-4,1,-3,1,-2,1,-1,1]$ \\
    \hline
    8p*&  $[-1,0,0,0,0,0,0,0,1]$ \\
    \hline
    8q&  $[11,-10,10,-9,8,-6,4,-3,1]$ \\
    \hline
    8r*&  $[-1,0,-1,0,0,0,1,0,1]$ \\
    \hline
    8s&  $[1,-2,1,-2,1,-2,1,-1,1]$ \\
    \hline
    8t&  $[5,-3,3,-3,3,-2,1,-2,1]$ \\
    \hline
    8u&  $[3,-5,5,-2,4,-3,1,-2,1]$ \\
    \hline
    8v&  $[3,-4,4,-3,3,-2,1,-2,1]$ \\
    \hline
    8w&  $[3,-1,2,-2,1,-2,0,-1,1]$ \\
    \hline
    8x&  $[21,-20,18,-15,13,-10,7,-4,1]$ \\
    \hline
    8y&  $[3,-4,2,-4,1,-3,1,-1,1]$ \\
    \hline
    8z& $[5,-5,5,-3,4,-3,1,-2,1]$ \\
    \hline

    \end{tabular}
\end{center}

}

  With this method, one of the three polynomials of degree 18 found in \cite{EMRS}, could have been found as it  is exactly of this form:
 \small{
 \begin{equation*}
[15, -15, 15, -14, 14, -13, 12, -11, 10, -9, 8, -7, 6, -5, 4, -3, 2, -2, 1]
\end{equation*}
}

So our hope was that this new basis could shed new light on this problem. We thus decided to study some families of polynomials that have some regularity in their Chebyshev coordinates.

We  determine  certain conditions on the Chebyshev coefficient that  guarantee that the given polynomial has roots in the interval  $[-2,2]$, or, as we often say, be a polynomial of Kronecker type.

A certain generalization of roots of unity is given by Salem numbers (\cite{Sal1},\cite{Sal2}, see also \cite{SMY}). A Salem number is defined as an algebraic integer $\tau>1$ of degree at least 4, conjugate to $\tau^{-1}$, all of whose conjugate different from $\tau$ and $\tau^{-1}$ lie on the unit circle.
We find certain families of polynomials that are essentially minimal polynomials of Salem numbers. To show this we use the classic construction of Salem starting from Pisot numbers.

\section{Preliminaries}\label{prelim}

Consider  Chebyshev polynomials as defined in \cite{ROB1} for $n>0$
\begin{equation}
  T_n(x)=x^n+\sum_{k=1}^{\floor{\frac{n}{2}}}(-1)^k\frac nk \binom{n-k-1}{k-1}x^{n-2k}
\end{equation}
and $T_0(x)=1$. Notice that these are the Chebyshev polynomials in $[-2,2]$, namely $T_n(2\cos \theta)=2\cos (n\theta)$.

Let $\mathcal{B}=(T_0(x),T_1(x),T_2(x),\ldots)$ be the  ordered basis   made up of these monic polynomials and let  $m$ be the matrix of the change of basis
from the standard basis $\mathcal N=
(1,x,x^2,x^3,\ldots )$ to $\mathcal B$.

The matrix   $m$ is infinite, upper triangular with 1 on the main diagonal.
Its entries are
 $$ m_{0,0}=1, \quad m_{0,2j+1}=0, \quad j=0,1,\ldots,\quad m_{0,2j}=(-1)^{j}2,\quad j=1,2,\ldots$$
 $$m_{2i,2j}= (-1)^{i+j}\binom{i+j-1}{2i-1}\frac{j}{i},\quad m_{2i,2j+1}=0,\quad i=1,2,\ldots, \quad j=0,1,\ldots,$$
 $$m_{2i+1,2j}= 0, m_{2i+1,2j+1}= (-1)^{i+j}\binom{i+j}{2i}\frac{2j+1}{2i+1}, i=0,1,\ldots,\quad j=0,1,\ldots,$$

Its inverse matrix  $b=m^{-1}$ is also infinite and upper triangular and its entries are
$$
b_{2i,j}=\frac{1+(-1)^j}{2}\binom{j}{\frac{j+2i}{2}}, i\geq 0, j\geq 0,
$$

$$
b_{2i+1,j}=\frac{1-(-1)^j}{2}\binom{j}{\frac{j-2i-1}{2}}, i\geq 0, j\geq 0.
$$

In what follows we are going to  examine some families of polynomials whose coordinates in the Chebyshev basis are particularly simple and regular.
In general, for a family of polynomials, we would like to ascertain whether they have  all their roots in the critical interval $[-2,2]$ and if not, whether the span is ``small'' or not.

As mentioned in the Introduction, we are going to use the transformation  $x=z+z^{-1}$, where $x\in \mathbb{R}$ and $z\in \mathbb{C}$. Notice that
  $x\in \mathbb{R}$ implies that either $z\in \mathbb{R}$, and so $|x|\ge 2$, or $z$ lies on the unit circle and  $|x| \le 2$.

Recall that for the  Chebyshev polynomials of the first kind, this transformation gives, for $n>0$,
$$
T_n(x)=T_n(z+z^{-1})=z^n+z^{-n}.
$$

{\bf Notation}: Given a function $f(x)$, we shall write $\widetilde{f}(z)=f(z+z^{-1})$. Obviously, $\widetilde{T}_0(z)=1$.
\section{Some Kronecker families}
For $k,s$ nonnegative integers, and an integer $n>1$,
 let $P_{k,n}^{(s)}(x)$ be a polynomial with Chebyshev coordinates
$$[\underbrace{0,\ldots, 0}_{s},1,\underbrace{0,\ldots, 0}_{k},1,\underbrace{0,\ldots, 0}_{k},1,\ldots,1,\underbrace{0,\ldots, 0}_{k},1]$$
 where $n$ is the number of 1, and $n-1$ the number of blocks of $k$ zeros.

We have
$$\widetilde{P}_{k,n}^{(s)}(z)=\widetilde{T}_s(z)+\widetilde{T}_{k+1+s}(z)+\cdots+\widetilde{T}_{(n-1)(k+1)+s}(z)$$
$$= \sum_{j=0}^{n-1}(z^{j(k+1)+s}+z^{-(j(k+1)+s)}).$$

Hence
\begin{equation*}
  \begin{split}
    &z^{(n-1)(k+1)+s}\widetilde{P}_{k,n}^{(s)}(z)\\
    & = z^{(n-1)(k+1)+s}(z^s+z^{-s}+\cdots+z^{(n-1)(k+1)+s}+z^{-((n-1)(k+1)+s)})\\
    &=1+z^{k+1}+\cdots+z^{(n-1)(k+1)}+z^{(n-1)(k+1)+2s}+\cdots+z^{2(n-1)(k+1)+2s}\\
    &=(1+z^{(n-1)(k+1)+2s})(1+z^{k+1}+\cdots+z^{(n-1)(k+1)})\\
    &= (1+z^{(n-1)(k+1)+2s})\frac{z^{n(k+1)}-1}{z^{k+1}-1}.
  \end{split}
\end{equation*}

Then the roots of $z^{(n-1)(k+1)+s}\widetilde{P}_{k,n}^{(s)}(z)$ are  all on the unit circle. So, translating back to $x$,  $P_{k,n}^{(s)}(x)$ is hyperbolic, its roots are in $[-2,2]$ and is therefore of Kronecker type.

Fixing $k$ and $s$, as $n$ approaches $+\infty$,  the roots become dense in the unit circle and so the largest root tends to 2 and the smallest to $-2$.

\begin{rem}
  The special case $s=0$, $k=1$, any $n$, 
  $P_{1,n}^{(0)}(x)$ has coordinates
$$[1,0,1,0,1,\ldots,0,1,0,1]$$
 where $n$ is the number of 1, and $n-1$ the number of  zeros. This is just a sum of Chebyshev polynomials of the first kind.
  The case $s=0$ or $s=1$, $k=1$, any $n$, gives Chebyshev polynomials of the second kind.
  \end{rem}

Let $A_{n}(x)$ be a degree $n$ polynomial with  Chebyshev coordinates $$[2,2,2,\ldots,2,2,1 ].$$
We have
$$\widetilde{A}_{n}(z)=2\widetilde{T}_0(z)+2\widetilde{T}_1(z)+2\widetilde{T}_2(z)+\cdots+2\widetilde{T}_{n-1}(z)+\widetilde{T}_{n}(z)$$
$$=2(1+\sum_{j=1}^{n-1}(z^{j}+z^{-j}))+z^{n}+z^{-n}.$$
Hence

\begin{equation*}
  \begin{split}
    z^{n}\widetilde{A}_{n}(z) &=
    2(1+z+z^2+\cdots+z^{2n-1})+z^{2n}-1\\
    &=2\frac{z^{2n}-1}{z-1}+z^{2n}-1=(z^{2n}-1)\frac{z+1}{z-1},
  \end{split}
\end{equation*}
which again has all roots on the unit circle.
Hence $A_{n}(x)$ is a Kronecker polynomial.

Let $B_{2n}(x)$ be a  polynomial of even degree $2n$ with Chebyshev coordinates
$$
[2,1,2,1,\ldots,2,1,1 ].
$$

Then
\begin{equation*}
  \begin{split}
    \widetilde{B}_{2n}(z)
    &=2\widetilde{T}_0(z)+\widetilde{T}_1(z)+2\widetilde{T}_2(z)+\widetilde{T}_3(z)+\cdots
    +2\widetilde{T}_{2n-2}(z)+\widetilde{T}_{2n-1}(z)+\widetilde{T}_{2n}(z)\\
    &=2(1+\sum_{j=1}^{n-1}(z^{2j}+z^{-2j}))+\sum_{j=1}^{n}(z^{2j-1}+z^{-(2j-1)})+z^{2n}+z^{-2n}.
  \end{split}
\end{equation*}
and so

$$z^{2n}\widetilde{B}_{2n}(z) =$$
$$2(1+z^2+\cdots+z^{4n-2})+z(1+z^2+\cdots+z^{4n-2})+z^{4n}-1=$$
$$(2+z)\frac{z^{4n}-1}{z^2-1}+z^{4n}-1= (z^{4n}-1)(\frac{z^2+z+1}{z^2-1}).$$
which again has all roots on the unit circle.
Hence $B_{2n}(x)$ is a Kronecker polynomial.

\medskip
The case  $B_{2n+1}(x)$, a polynomial of degree $2n+1$ and Chebyshev coordinates $$[1,2,1,2,1,\ldots,2,1,1 ]$$ is completely analogous.

Both in the case $\widetilde{A}_n(z)$ and $\widetilde{B}_n(z)$
we can see that, as $n$ approaches $+\infty$,  the roots become dense in the unit circle and so the largest root of $A_n(x)$ and $B_n(x)$ tends to 2 and the smallest to $-2$.

\section{One more family}

Let $P_{2n+k-1}(x)$ be the family of polynomials of degree $2n+k-1$ and coordinates
$$
[-1,0,-1,0,\ldots,-1,\underbrace{0,\ldots,0}_{k},1],
$$
 where $n$ is the number of $-1$ and $k$ a nonnegative integer.

$$\widetilde{P}_{2n+k-1}(z)=-\widetilde{T}_0(z)-\widetilde{T}_2(z)-\widetilde{T}_4(z)+\cdots-\widetilde{T}_{2n-2}(z)+\widetilde{T}_{2n+k-1}(z)$$
$$=-(1+z^{2}+z^{-2}+z^{4}+z^{-4}+\cdots+z^{2n-2}+z^{-(2n-2)})+z^{2n+k-1}+z^{-(2n+k-1)},$$
and so
$$z^{2n+k-1}(z^2-1)\widetilde{P}_{2n+k-1}(z) =$$
$$[-(z^{k+1}+z^{k+3}+\cdots+z^{4n+k-3})+1+z^{4n+2k-2}](z^2-1)=$$
$$ z^{4n+2k}-z^{4n+2k-2}-z^{4n+k-1}+z^{k+1}+z^2-1=$$
$$z^{4n+k-1}(z^{k+1}-z^{k-1}-1)+z^{k+1}+z^{2}-1.$$

When $k=0$ this last step can be rewritten as $z^{4n-2}(z^{2}-z-1)+z^{2}+z-1$.

We now observe that
\begin{prop}\label{same module}
For $k\geq 1$ the polynomials $ z^{k+1}-z^{k-1}-1$ and $z^{k+1}+z^{2}-1$ have equal absolute value on the unit circle. The same happens for $z^2-z-1$ and $z^2+z-1$ (case $k=0$).
\end{prop}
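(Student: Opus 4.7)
My approach is to exhibit each second polynomial as (a unit multiple of) the reciprocal of the first, and then invoke the standard fact that a polynomial and its reciprocal have the same modulus on the unit circle. Concretely, for a polynomial $f(z)$ of degree $d$, the reciprocal $z^d f(1/z)$ satisfies $|z^d f(1/z)| = |f(1/z)| = |f(z)|$ when $|z|=1$, since $|z^d|=1$ and, because $f$ has real coefficients and $\overline{1/z}=z$ on the unit circle, $\overline{f(1/z)} = f(z)$.

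For the case $k\ge 1$, set $f(z) = z^{k+1} - z^{k-1} - 1$, which has degree $k+1$. I compute
\begin{equation*}
z^{k+1} f(1/z) \;=\; z^{k+1}\bigl(z^{-(k+1)} - z^{-(k-1)} - 1\bigr) \;=\; 1 - z^2 - z^{k+1} \;=\; -\bigl(z^{k+1} + z^2 - 1\bigr).
\end{equation*}
Hence $g(z) := z^{k+1} + z^2 - 1 = -z^{k+1} f(1/z)$, and the observation above gives $|g(z)|=|f(z)|$ whenever $|z|=1$.

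For the case $k=0$, set $f(z) = z^2 - z - 1$, of degree $2$. Then
\begin{equation*}
z^2 f(1/z) \;=\; z^2\bigl(z^{-2} - z^{-1} - 1\bigr) \;=\; 1 - z - z^2 \;=\; -\bigl(z^2 + z - 1\bigr),
\end{equation*}
so again $g(z) = z^2 + z - 1 = -z^2 f(1/z)$, and the same reasoning yields $|g(z)|=|f(z)|$ on the unit circle.

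There is no real obstacle here; the only thing one has to notice is the reciprocal-polynomial relationship, which after the substitution $z\mapsto 1/z$ and multiplication by the right power of $z$ turns the ``middle'' term $-z^{k-1}$ of $f$ into the term $-z^{2}$ of $g$ (and similarly in the degenerate $k=0$ case). Everything else is a one-line modulus estimate on $|z|=1$.
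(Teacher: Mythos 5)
Your proof is correct and is essentially the same as the paper's: both identify $z^{k+1}+z^2-1$ as $-z^{k+1}f(1/z)$ for $f(z)=z^{k+1}-z^{k-1}-1$ and conclude via $|z^{k+1}|=1$ and $\overline{f(1/z)}=f(z)$ on the unit circle (and likewise for $k=0$). No differences worth noting.
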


\begin{proof}

Let $k\geq 1$ and set $z^{k+1}-z^{k-1}-1=f(z)$ and $z^{k+1}+z^{2}-1=g(z)$, we notice that $g(z)=-z^{k+1}f(z^{-1})$. For $|z|=1$ we have $f(z^{-1})=f(\bar{z})$. Hence for $|z|=1$ we have
$$|g(z)|=|z^{k+1}||f(z^{-1})|=1|f(\bar{z})|=|\overline{f(z)}|=|f(z)|.$$
A similar argument holds for the two polynomials $z^2-z-1$ and $z^2+z-1$.
\end{proof}

In general, given two polynomials $f(x)$, $g(x)$, where $g(z)=\pm z^{degf}f(z^{-1})$, $f(x)$ and $g(x)$  have the same absolute value on the unit circle.

We then have
\begin{thm}
 For $k=0,1,2,3,5,7$, the polynomials $P_{2n+k-1}(x)$ are hyperbolic with all roots in the interval $[-2,2]$ except for one $x_M>2$, moreover $\lim_{n\to \infty} x_M=z_0+z_0^{-1}$, where $z_0$ is the largest real root of the polynomial $z^{k+1}-z^{k-1}-1$ ($z^2-z-1$ for $k=0$), while the smallest root $x_m$ approaches $-2$.
\end{thm}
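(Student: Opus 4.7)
The proof rests on the factorization
\[
z^{2n+k-1}(z^2-1)\widetilde{P}_{2n+k-1}(z) \;=\; F_n(z) := z^{4n+k-1}f(z)+g(z)
\]
derived in the computation just before the statement, where $f(z)=z^{k+1}-z^{k-1}-1$ (replaced by $z^2-z-1$ for $k=0$) and $g(z)=z^{k+1}+z^2-1$. Proposition~\ref{same module} gives $|z^{4n+k-1}f(z)|=|g(z)|$ on $|z|=1$. The strategy is a Salem-type Rouch\'e argument to locate the zeros of $F_n$: one real zero just outside the unit disk near $z_0$, its reciprocal just inside near $1/z_0$, and all the rest on $|z|=1$. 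Translating via $x=z+z^{-1}$, the real pair becomes the outlier $x_M>2$, the unit-circle zeros become roots of $P_{2n+k-1}$ in $[-2,2]$, and $x_M\to z_0+z_0^{-1}$, while the smallest root $x_m$ approaches $-2$ by density of the unit-circle roots.

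First I would establish the existence and uniqueness of $z_0>1$. For each $k\in\{0,1,2,3,5,7\}$ one has $f(1)=-1<0$ and $f(2)>0$; since $f'(z)=z^{k-2}\bigl((k+1)z^2-(k-1)\bigr)>0$ for $z\ge 1$ when $k\ge 2$ (the cases $k=0,1$ are immediate), there is a unique real root $z_0\in(1,2)$. Next, fix a small $\delta>0$ and consider $F_n$ on the circle $|z|=1+\delta$. A short check shows that $f$ has no zeros on $|z|=1$ for the listed $k$, so $|f|$ is bounded below on $|z|=1+\delta$ for $\delta$ small; hence $|z^{4n+k-1}f(z)|=(1+\delta)^{4n+k-1}|f(z)|$ grows exponentially in $n$ and eventually dominates $|g(z)|$ uniformly. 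Rouch\'e's theorem then says $F_n$ and $z^{4n+k-1}f(z)$ have the same number of zeros in $\{|z|\le 1+\delta\}$; a symmetric argument on $|z|=1-\delta$ using dominance of $g$ handles the interior count. Letting $\delta\to 0^+$ and using the reciprocal identity $z^{4n+2k}F_n(1/z)=-F_n(z)$, one finds exactly one zero of $F_n$ outside the closed disk, close to $z_0$, its reciprocal inside close to $1/z_0$, and all the remaining $4n+2k-2$ zeros on the unit circle.

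The passage to the $x$-variable is then direct. The real pair $(z_0^{(n)},1/z_0^{(n)})$ produces the single root $x_M=z_0^{(n)}+1/z_0^{(n)}>2$ of $P_{2n+k-1}$, and continuity of roots in $n$ yields $x_M\to z_0+z_0^{-1}$. The unit-circle zeros of $F_n$, after discarding the $\pm 1$ contributed by the factor $z^2-1$, are the zeros of $z^{2n+k-1}\widetilde{P}_{2n+k-1}(z)$ on the circle; each conjugate-reciprocal pair corresponds to one root of $P_{2n+k-1}$ in $(-2,2)$. A winding-number count, using that $\arg(z^{4n+k-1}f(z)/g(z))$ increases by $2\pi(4n+k-1)+O(1)$ as $z$ traverses $|z|=1$, shows these zeros equidistribute as $n\to\infty$, forcing the extreme $x$-values in $[-2,2]$ to converge to $\pm 2$; in particular $x_m\to -2$.

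The principal obstacle is the Rouch\'e step at the unit circle itself: the equality $|z^{4n+k-1}f|=|g|$ on $|z|=1$ forbids a direct application there, so the argument must be run on the perturbed contours $|z|=1\pm\delta$ and the limits taken carefully enough that no zero slips through the count, with attention to possible accumulation of unit-circle zeros. A secondary issue is the case-by-case verification that for each admissible $k$ the full root structure of $f$, together with the structural symmetries of $F_n$, does not inject further zeros outside the unit disk that would violate the stated count; this is precisely what the restriction $k\in\{0,1,2,3,5,7\}$ encodes, and where the argument needs its most delicate bookkeeping.
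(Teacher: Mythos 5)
Your strategy is, in substance, the same as the paper's: the identity $z^{2n+k-1}(z^2-1)\widetilde{P}_{2n+k-1}(z)=z^{4n+k-1}f(z)+g(z)$ with $g(z)=-z^{k+1}f(z^{-1})$ is exactly the shape $z^mP(z)\pm P^*(z)$ of Salem's construction, and your Rouch\'e count on $|z|=1\pm\delta$ is the standard proof of Salem's theorem, which the paper simply invokes by citation. The difference is that where the paper verifies the one hypothesis that makes the construction work, you defer it --- and that is where the argument breaks.

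The Rouch\'e count you run delivers ``exactly one zero of $F_n$ outside the closed unit disk'' only if $f$ itself has exactly one zero of modulus greater than $1$, i.e.\ only if $f$ is (the reverse of) a Pisot polynomial. You never check this, and for the odd values of $k$ it is \emph{false}: when $k$ is odd, $f(z)=z^{k+1}-z^{k-1}-1$ contains only even powers of $z$, so its roots come in pairs $\pm z$, and both $z_0$ and $-z_0$ lie outside the unit circle (for $k=1$, $f(z)=z^2-2$ with roots $\pm\sqrt2$; for $k=3$, $f(z)=z^4-z^2-1$ with real roots $\pm\sqrt{\varphi}$ and a conjugate pair inside the disk). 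Carried out honestly, your own contour count then yields \emph{two} zeros of $F_n$ off the unit circle, near $\pm z_0$, and after $x=z+z^{-1}$ the polynomial $P_{2n+k-1}$ has a root $x_m<-2$ tending to $-(z_0+z_0^{-1})$ --- not a smallest root tending to $-2$ by equidistribution. The paper handles odd $k=2h+1$ by substituting $y=z^2$ and checking that $y^{h+1}-y^h-1$ is a Pisot polynomial for $h=0,1,2,3$ (which is precisely why only $k=1,3,5,7$ appear among the odd values), and its proof accordingly concludes that the smallest root tends to $-(z_0+z_0^{-1})$; your write-up instead asserts a single exceptional root for all listed $k$, which the computation contradicts. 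The same omission affects the even cases: the ``delicate bookkeeping'' you postpone --- ruling out further zeros of $f$ outside the unit disk --- is not a side issue but the entire content of the restriction on $k$ (for even $k>2$ the polynomial $z^{k+1}-z^{k-1}-1$ acquires a conjugate pair of roots outside the circle and the construction fails, as the paper's subsequent remark notes), so it must be verified case by case rather than attributed to the hypothesis.
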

\begin{proof}
  Let
  \begin{equation}
    \begin{split}
      R(z)&=z^{2n-1}(z^2-1)\widetilde{P}_{2n+k-1}(z)=\\
     &=\begin{cases}
    z^{4n-2}(z^{2}-z-1)+z^{2}+z-1, &k=0\\
    z^{4n+k-1}(z^{k+1}-z^{k-1}-1)+z^{k+1}+z^{2}-1,&k\geq1
  \end{cases}.
    \end{split}
  \end{equation}

   For $k=0$, $z^{2}-z-1$  is the minimal polynomial of a Pisot number. Since Proposition \ref{same module} guarantees that $|z^{2}-z-1|=|z^{2}+z-1|$ on the unit circle, then a standard result of Salem, \cite{Sal3}, (see also \cite{Sal1} and \cite{Sal2}).
   implies that, for large enough $n$, $R(z)$ is the minimal polynomial of a Salem number, possibly multiplied by some cyclotomic polynomials.

It is then clear that the largest root of $R(z)$ approaches, as $n$ goes to infinity, the largest root $z_0(>1)$ of $z^2-z-1$, another root approaches $\frac{1}{z_0}$, while all the other roots are on the unit circle, and, as $n$ goes to $\infty$,  they become dense there. Thus, translating back to $x$, all roots $x$ are real, the least root tends to $-2$ and the greatest to $z_0+z_0^{-1}$.

   For $k=2$, $z^{k+1}-z^{k-1}-1=z^3-z-1$  is the minimal polynomial of a Pisot number and we can repeat the same argument as above.

   \medskip
   For odd $k$, setting $k=2h+1$, the polynomial $z^{k+1}-z^{k-1}-1=z^{2h+2}-z^{2h}-1$ cannot be a minimal polynomial of a Pisot number, so we set $z^2=y$ and obtain $y^{h+1}-y^h-1$.
$R(z)$, as a function of $y$, is $y^{2n+h}(y^{h+1}-y^h-1)+y^{h+1}+y-1$. For $h=0,1,2,3$ these are minimal polynomials of a Pisot number so an argument similar to the even cases $k=0, 2$ shows that
$y^{2n+h}(y^{h+1}-y^h-1)+y^{h+1}+y-1$ is a minimal polynomial of a Salem number. Hence $R(z)$ has pairs of opposite roots on the unit circle except for two on the real axis, one less than -1 and the other greater than 1. As $n$ goes to infinity the positive root tends to the largest root of  $z^{2h+2}-z^{2h}-1$. Thus, translating back to $x$, all roots $x$ are real,  the greatest tends to $z_0+z_0^{-1}$ (and the smallest to $-(z_0+z_0^{-1})$ .
\end{proof}

\begin{rem}
  For $k$ even and greater than  2 and for $k$ odd and greater than 7 unfortunately  $z^{k+1}-z^{k-1}-1$ is not the minimal polynomial of a  Pisot number. For even $k$, besides having a real root greater than 1, it also has two complex conjugates roots outside of the unit circle. For $k$ odd, after setting $z^2=y$, we have an analogous  situation. Thus Salem construction in theory is not possible. However,  although the polynomials of the family are not hyperbolic, for even  $k>2$, they still have a largest real root that approaches the largest real root of $z^{k+1}-z^{k-1}-1$  and the smallest root approaches $-2$ while, for odd $k$, the largest and smallest root, that are opposite, approach the largest and smallest, respectively, of $z^{k+1}-z^{k-1}-1$.

Actually, these polynomials are almost hyperbolic, in the sense that, for even  $k$, they have only a pair of complex conjugate roots, and for odd $k$ only two opposite pairs of complex conjugate roots.
\end{rem}

\section{Two-parameter family}

In this section, we consider the family of polynomials $P^{(n)}_{(h_1,h_2)}(x)$ depending on two integer parameters $h_1,h_2$ with $1<h_1\leq h_2$, with Chebyshev coordinates
$$[1,\underbrace{-h_1,h_2,-h_1,h_2,\ldots, -h_1,h_2}_{2n},1]$$
 of degree $2n+1$ in $x$. We want to rewrite this polynomial in a suitable way.  For example, for $n=1$,

$$
\widetilde{P}^{(1)}_{(h_1,h_2)}(z)=\widetilde{T}_0(z)-h_1\widetilde{T}_1(z)+h_2\widetilde{T}_2(z)+\widetilde{T}_3(z)
$$
$$
=1-h_1(z+z^{-1})+h_2(z^2+z^{-2})+(z^3+z^{-3})
$$
Add and subtract $h_2$:
$$
\widetilde{P}^{(1)}_{(h_1,h_2)}(z)=z^3+z^{-3}+(1-h_2)-h_1(z+z^{-1})+h_2(1+z^2+z^{-2})
$$
multiply by $z^3(z^2-1)$:
$$
z^3(z^2-1)\widetilde{P}^{(1)}_{(h_1,h_2)}(z)=
$$
$$
(z^2-1)\left[z^6+1-(h_2-1)z^3-h_1(z^4+z^2)+h_2(z^3+z^5+z)\right]=
$$

$$
z^6\left[z^2+h_2 z-(h_1+1)\right]-(h_2-1)z^3(z^2-1)+(1+h_1)z^2-h_2z-1=
$$
$$
z^3\left\{z^3\left[z^2+h_2z-(h_1+1)\right]- \frac{(h_2-1)}{2}(z^2-1)\right\}
$$
$$
- \frac{(h_2-1)}{2}z^3(z^2-1)+(1+h_1)z^2-h_2z-1
$$
splitting the central summand  $-(h_2-1)z^{2n+1}(z^2-1)$ in two halves and adding one half to the first summand and the other half to the second summand.

This is the computation for $n=1$.

For $n\geq 1$ we can analogously obtain
\begin{equation}\label{rewrite1}
  \begin{split}
    &z^{2n+1}(z^2-1)\widetilde{P}^{(n)}_{(h_1,h_2)}(z)\\
    &=z^{2n+1}\left\{z^{2n+1}\left[z^2+h_2z-(h_1+1)\right]- \frac{(h_2-1)}{2}(z^2-1)\right\}\\
    &- \frac{(h_2-1)}{2}z^{2n+1}(z^2-1)+(1+h_1)z^2-h_2z-1\\
      \end{split}
\end{equation}

which we rewrite as
\begin{equation}\label{rewrite2}
  z^{2n+1}(z^2-1)\widetilde{P}^{(n)}_{(h_1,h_2)}(z)=z^{2n+1}Q(z)-z^{2n+3}Q(z^{-1})
\end{equation}
 where $Q(z)=z^{2n+1}\left[z^2+h_2z-(h_1+1)\right]-\frac{(h_2-1)}{2}(z^2-1)$. If  $Q(z)$ were the minimal polynomial of a  Pisot number then, by Salem construction,  $z^{2n+1}\widetilde{P}^{(n)}_{(h_1,h_2)}(z)$ is, (for sufficiently large $n$) the minimal polynomial of a  Salem number, possibly multiplied by some cyclotomic polynomials.

 If  $h_2$ is even, however, $Q(z)$ does not have integer coefficients and so it is not the minimal polynomial of a Pisot number. However  $z^{2n+1}\widetilde{P}^{(n)}_{(h_1,h_2)}(z)$ has integer  coefficients, as long as $Q(z)$ has a root greater than 1  in absolute value and all the other are inside the unit disc, then the Salem construction still works.

Set $U(z)=z^2+h_2z-(h_1+1)$. Since $U(-1)=-h_1-h_2<0$ (recall that both parameters are assumed  positive), while $U(1)=h_2-h_1\geq 0$, (by assumption $h_1\leq h_2$), we know that $U(z)$ has a real root $\alpha<-1$  and the other  in $(-1,1]$. So $\alpha$ is the opposite of a Pisot number.   In the case $h_1=h_2$ we have $U(z)=(z-1)(z+h_1+1)$.

\begin{lem}
  $$
\left|\frac{h_2-1}{2}(z^2-1)\right|\leq |U(z)|
$$
\end{lem}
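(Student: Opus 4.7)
The plan is to prove the inequality on the unit circle $|z|=1$, which is the domain where the estimate is needed for the subsequent Salem construction via Rouch\'e's theorem. Write $z=e^{i\theta}$. Since $U$ has real coefficients, $\overline{U(z)}=U(z^{-1})$ on $|z|=1$, and therefore
\begin{equation*}
|U(z)|^{2}=U(z)\,U(z^{-1}),
\end{equation*}
which I will expand as a polynomial in $c=z+z^{-1}=2\cos\theta$. Using either direct multiplication or the Vi\`ete relations $\alpha+\beta=-h_{2}$, $\alpha\beta=-(h_{1}+1)$ for the two real roots of $U$, this produces
\begin{equation*}
|U(z)|^{2}=1+h_{2}^{2}+2(h_{1}+1)+(h_{1}+1)^{2}-2h_{1}h_{2}\cos\theta-4(h_{1}+1)\cos^{2}\theta.
\end{equation*}
Combined with the elementary identity $\bigl|\tfrac{h_{2}-1}{2}(z^{2}-1)\bigr|^{2}=(h_{2}-1)^{2}\sin^{2}\theta$, this reduces the claim to a one-variable trigonometric inequality in $\cos\theta$.

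The key step is to recognize that the difference admits a manifest sum-of-squares decomposition,
\begin{equation*}
|U(z)|^{2}-\Bigl|\tfrac{h_{2}-1}{2}(z^{2}-1)\Bigr|^{2}=(h_{1}-h_{2}\cos\theta)^{2}+(4h_{1}+2h_{2}+3)\sin^{2}\theta,
\end{equation*}
which I would verify by expanding the right-hand side and matching the coefficients of $1$, $\cos\theta$ and $\cos^{2}\theta$. Because $h_{1},h_{2}\geq 2$ (indeed $1<h_{1}\leq h_{2}$), the coefficient $4h_{1}+2h_{2}+3$ is strictly positive, so both terms on the right are nonnegative and the desired inequality follows. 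Note that equality can occur (e.g.\ at $\theta=0$ when $h_{1}=h_{2}$), in line with the weak inequality stated in the lemma.

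The main obstacle is guessing this decomposition. A useful hint is to evaluate the difference at the boundary values $\theta=0$ and $\theta=\pi$, which yield the squares $(h_{1}-h_{2})^{2}$ and $(h_{1}+h_{2})^{2}$ respectively; this suggests writing the difference in the form $(h_{1}-h_{2}\cos\theta)^{2}$ plus a multiple of $\sin^{2}\theta$ and then solving for the multiple. Once the decomposition is written down, the verification is purely routine algebra. As a fallback, if the sum-of-squares rewriting were not available, one could instead treat the difference as a quadratic $g(t)=A-2h_{1}h_{2}t+Dt^{2}$ in $t=\cos\theta$, check the nonnegative boundary values $g(\pm 1)$, and handle separately the two cases $D\geq 0$ and $D<0$ by examining where the vertex lies relative to $[-1,1]$; the sum-of-squares form makes this case analysis unnecessary.
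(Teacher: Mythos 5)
Your proof is correct and is essentially the paper's own argument: the paper likewise writes $z=a+ib$ on the unit circle and exhibits the identical decomposition $|U(z)|^2-\bigl|\tfrac{h_2-1}{2}(z^2-1)\bigr|^2=(h_1-h_2a)^2+(4h_1+2h_2+3)b^2\geq 0$. Your version merely adds the (correct) intermediate expansion in $\cos\theta$ and the heuristic for finding the sum-of-squares form.
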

 \begin{proof}
   For $z=a+ib$ on the unit circle we have
   $$
  |U(z)|^2- \left|\frac{h_2-1}{2}(z^2-1)\right|^2=(h_1-h_2a)^2+(4h_1+2h_2+3)b^2\geq 0.
   $$
 \end{proof}

 \begin{thm}
For every pair of integers $(h_1,h_2)$, with $0<h_1\le h_2$, the polynomials of the family with   Chebyshev coordinates $$[1,-h_1,h_2,-h_1,h_2,\ldots,-h_1,h_2,1]$$  are hyperbolic with all roots in the interval $[-2,2]$ except for one $x_m<-2$, moreover $\lim_{n\to \infty} x_m=z_0+z_0^{-1}$, where $z_0$ is the smallest real root of the polynomial $z^{2}+h_2z-(h_1+1)$, while the largest root $x_M$ approaches $2$.
\end{thm}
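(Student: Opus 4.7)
My plan is to imitate the Salem-construction argument of the proof of the previous theorem, now applied to the rewriting \eqref{rewrite2}. The key inputs are the polynomial $U(z)=z^2+h_2z-(h_1+1)$, the perturbation $\frac{h_2-1}{2}(z^2-1)$, and the Lemma above. Since $U(-1)=-h_1-h_2<0$, $U(1)=h_2-h_1\ge 0$, and the product of roots of $U$ equals $-(h_1+1)<0$, one root $\alpha$ of $U$ is real with $\alpha<-1$ and the other root $\beta$ lies in $(-1,1]$ (with $\beta=1$ precisely when $h_1=h_2$). Thus $-\alpha$ is a Pisot number (or a limit of Pisot numbers in the borderline subcase), and $z_0=\alpha$ is the smallest real root referred to in the statement.

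I would then combine \eqref{rewrite2} with the Lemma and invoke Salem's construction, just as in the proof of the preceding theorem. Namely, the Salem-type identity \eqref{rewrite2} together with the modulus domination $\left|\frac{h_2-1}{2}(z^2-1)\right|\le|U(z)|$ on the unit circle ensures that, for all $n$ sufficiently large, $z^{2n+1}\widetilde{P}^{(n)}_{(h_1,h_2)}(z)$ is the minimal polynomial of a Salem-type algebraic integer, possibly times cyclotomic factors (absorbing, among others, the spurious $z^2-1$ introduced into \eqref{rewrite2}). Its roots consist of exactly two real values $\tau,\tau^{-1}$ with $\tau<-1$ and $\tau^{-1}\in(-1,0)$, together with conjugate pairs on the unit circle that become equidistributed as $n\to\infty$; moreover $\tau\to\alpha$. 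Translating through $x=z+z^{-1}$, the pair $\{\tau,\tau^{-1}\}$ collapses into the single real value $x_m=\tau+\tau^{-1}<-2$, each conjugate pair of unit-circle roots produces a real root of $P^{(n)}_{(h_1,h_2)}$ in $[-2,2]$, equidistribution forces $x_M\to 2$, and $\tau\to\alpha$ yields $x_m\to\alpha+\alpha^{-1}=z_0+z_0^{-1}$.

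The main obstacle is verifying that Salem's construction truly applies at this level of generality. Two subtleties arise. First, when $h_2$ is even the perturbation $\frac{h_2-1}{2}(z^2-1)$ has half-integer coefficients, so the companion $Q(z)$ of \eqref{rewrite2} is not literally a polynomial with integer coefficients; this is exactly the case anticipated in the discussion before the statement of the theorem, and is handled by the observation that $\widetilde{P}^{(n)}_{(h_1,h_2)}(z)$ does have integer coefficients and Salem's argument depends only on the modulus inequality on the unit circle together with $U$ having a single root strictly outside the closed unit disk. Second, in the borderline case $h_1=h_2$ one has $\beta=1$ on the unit circle, producing an additional cyclotomic factor $z-1$ that must be absorbed into the cyclotomic part. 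Neither issue affects the conclusion.
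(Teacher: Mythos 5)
Your proposal follows essentially the same route as the paper's own proof: it combines the identity \eqref{rewrite2} with the modulus-domination lemma (via Rouch\'e) to show $Q(z)$ has one real root less than $-1$ and the rest inside the unit disc, then invokes Salem's construction and translates back through $x=z+z^{-1}$, including the same caveat about non-integer coefficients when $h_2$ is even. The only difference is that you spell out the borderline case $h_1=h_2$ (where $U$ has the root $1$) a bit more explicitly than the paper does; the substance of the argument is identical.
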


\begin{proof}
A standard argument, based on  Rouch\'e's theorem and because of the lemma,  shows that  $Q(z)$ in \eqref{rewrite2} has a real root less than  $-1$ and all the others in  $|z|<1$, in other words it  corresponds to the opposite of a Pisot number  (in an extended sense if $h_2$ is even as the coefficients may not be integers). Therefore $z^{2n+1}(z^2-1)\widetilde{P}^{(n)}_{(h_1,h_2)}(z)$ is the minimal polynomial of the opposite of a Salem number, possibly multiplied by some cyclotomic polynomials. From \eqref{rewrite1} one can deduce that as $n$ goes to $+\infty$ the Salem number approaches the smallest root $z_0$ of $z^2+h_2z-(h+1)$ and the other roots become dense on the unit circle. Translating back to $x$ we see that $P^{(n)}_{(h_1,h_2)}(x)$ is hyperbolic, with one root less than $-2$ and all the other in $[-2,2]$, and as $n$ goes to $+\infty$ the smallest root tends to $x_m=z_0+\frac{1}{z_0}$ and the largest $x_M$ tends to $2$.
\end{proof}

\begin{rem}
  It can be shown that the limit of the span, namely $2-z_0-z_0^{-1}$, is the largest root of
  the resultant with respect to $z$ of $z^2+h_2z-h_1-1$ and $(x-(2-z-z^{-1}))z$:
  $$
  -(h_1+1)x^2+(4(h_1+1)+h_1h_2)x+(h_2-h_1)^2
  $$
\end{rem}

\begin{rem}
  Elementary calculations also show that $\displaystyle\lim_{n\to \infty} x_m$ is the negative root of the polynomial
$$
(h_1+1)x^2+h_1h_2 x-[h_2^2+(h_1+2)^2].
$$
\end{rem}

\iffalse
 The negative root of
$$
z^2+h_2 z-(h_1+1)
$$
will give the limit of $z_1$.

On the other hand, empirically we know that  in $x$ the  limit is $x_1$  negative root of the polynomial
$$
(h_1+1)x^2+h_1h_2 x-[h_2^2+(h_1+2)^2]
$$
We checked that indeed  $x_1=z_1+\frac 1{z_1}$. Set $z_1=-\frac{h_2+\sqrt{h_2^2+4(h_1+1)}}{2}$
\begin{equation}
  \begin{split}
    x_1=z_1+\frac 1{z_1}=-\frac{h_1h_2+(h_1+2)\sqrt{h_2^2+4(h_1+1)}}{2(h_1+1)}\\
   = -\frac{h_1h_2+\sqrt{(h_1+2)^2(h_2^2+4(h_1+1))}}{2(h_1+1)}\\
   =-\frac{h_1 h_2+\sqrt{h_1^2h_2^2+4(h_1+1)[h_2^2+(h_1+2)^2]}}{2(h_1+1)}
  \end{split}
\end{equation}
which is exactly the negative root of the polynomial

  $$
(h_1+1)x^2+h_1h_2 x-[h_2^2+(h_1+2)^2]
$$
Indeed, the radicands are equal
\begin{equation}
  \begin{split}
    (h_1+2)^2(h_2^2+4(h_1+1))\\
    =(h_1^2+4h_1+4)(h_2^2+4h_1+4)\\
    =h_1^2h_2^2+(4h_1+4)h_2^2+(4h_1+4)h_1^2+(4h_1+4)^2\\
    =h_1^2h_2^2+(4h_1+4)(h_2^2+h_1^2+4h_1+4)\\
    =h_1^2h_2^2+(4h_1+4)(h_2^2+(h_1+2)^2)
  \end{split}
\end{equation}

\fi
\section{Case of  three parameters}

Consider the family of polynomials $P^{(n)}_{(h_1,h_2,h_3)}(x)$ of degree $6n+1$ depending on the integer parameters $h_1,h_2, h_3,$ with $1<h_1\leq h_2\leq h_3$, and Chebyshev coordinates
$$[1,-h_1,h_2,-h_3,h_1,-h_2,h_3,\ldots,-h_1,h_2,-h_3,h_1,-h_2,h_3,1].$$

We have

\begin{equation}
  \begin{split}
    z^{6n+1} (z^3+1)\widetilde{P}^{(n)}_{(h_1,h_2,h_3)}(z)=z^{6n+1}[z^{6n+1}(z^3+h_3z^{2}-h_2z+h_1+1)\\
    +1-h_3+(h_2-h_1)z
    +(h_2-h_1)z^{2}+(1-h_3)z^3]\\
    +(h_1+1)z^3-h_2z^2+h_3z+1
  \end{split}
\end{equation}
Setting
\begin{equation}
  \begin{split}
    Q(z)&=z^{6n+1}(z^3+h_3z^{2}-h_2z+h_1+1)\\
    &+\frac{1-h_3+(h_2-h_1)z
    +(h_2-h_1)z^{2}+(1-h_3)z^3}{2}
  \end{split}
\end{equation}

one gets
    \begin{equation}\label{Salem}
      z^{6n+1} (z^3+1)\widetilde{P}^{(n)}_{(h_1,h_2,h_3)}(z)=z^{6n+1}Q(z)+z^{{6n+3}}Q(z^{-1})
        \end{equation}

\begin{lem}\label{minimal} The polynomial  $$U(z)=z^3+h_3z^{2}-h_2z+h_1+1$$  has a real root $\alpha<-1$ and the others in the unit disc. (So $\alpha$ is a Pisot number).
\end{lem}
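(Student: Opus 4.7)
The plan is to split the statement into an existence claim for $\alpha$ and a confinement claim for the other two roots, and to handle the latter via a direct factorization combined with the Schur--Cohn criterion.

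For existence of the real root $\alpha<-1$, I would evaluate $U(-1)=-1+h_3+h_2+h_1+1 = h_1+h_2+h_3>0$ and note $U(z)\to -\infty$ as $z\to-\infty$, so the intermediate value theorem supplies a real root $\alpha<-1$. A short sign analysis rules out further real roots in $[-1,+\infty)$: one has $U(0)=h_1+1>0$, $U(1)=h_1-h_2+h_3+2\geq 2$, and $U'(z)=3z^{2}+2h_3 z - h_2$ has its positive critical point strictly less than $1$ (because $h_2\leq h_3$), so $U$ is increasing and positive on $[1,+\infty)$.

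For the other two roots I would factor over $\mathbb{R}$ as $U(z)=(z+\mu)(z^{2}+az+b)$, where $\mu:=-\alpha>1$. Comparing coefficients gives $a=h_3-\mu$ and $b=(h_1+1)/\mu$, and the equation $U(-\mu)=0$ rearranges to the key identity
\[
\mu^{2}=h_3\mu+h_2+\frac{h_1+1}{\mu}.
\]
Both roots of $z^{2}+az+b$ lie in the closed unit disc iff $|b|\leq 1$ and $|a|\leq 1+b$ (Schur--Cohn). The identity forces $\mu>h_3$; because the $h_i$ are integers with $h_1\leq h_3$, either $h_3\geq h_1+1$, giving $\mu>h_1+1$, or $h_1=h_2=h_3$, in which case one checks directly that $U(-(h_1+1))=0$ and so $\mu=h_1+1$. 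In either event $|b|=(h_1+1)/\mu\leq 1$. For the second inequality, $|a|=\mu-h_3$; after multiplying the desired inequality through by $\mu$ and substituting the key identity, it collapses to $h_2-\mu\leq (h_1+1)(\mu-1)/\mu$, which is automatic since the left side is negative (as $\mu>h_3\geq h_2$) and the right side is positive.

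The main delicate point is the degenerate case $h_1=h_2=h_3=h$, where one computes $U(z)=(z+h+1)(z^{2}-z+1)$ and the two non-$\alpha$ roots are the primitive sixth roots of unity, lying on rather than inside the unit circle. So the conclusion ``in the unit disc'' must be read in the closed sense in this one case, and the strict Pisot property holds as soon as at least one of the inequalities $h_1\leq h_2\leq h_3$ is strict; this is the only subtlety worth flagging in the write-up, since the Salem construction invoked in the sequel will then absorb the corresponding cyclotomic factor $z^{2}-z+1$.
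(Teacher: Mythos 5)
Your proof is correct, but it takes a genuinely different route from the paper's. The paper evaluates $U$ at $-(h_1+1)$ to place the real root at $\alpha\le-(h_1+1)$, deduces $|\beta\gamma|=(h_1+1)/|\alpha|\le 1$ from the constant term, and then splits into cases: a complex-conjugate pair (each of modulus $\sqrt{|\beta\gamma|}$), versus two real roots, which are confined to $(-1,1)$ using the signs of $U(\pm1)$. You instead factor out the real root explicitly, compute the quadratic cofactor's coefficients $a=h_3-\mu$, $b=(h_1+1)/\mu$ with $\mu=-\alpha$, and verify the Schur--Cohn/Jury inequalities via the identity $\mu^2=h_3\mu+h_2+(h_1+1)/\mu$. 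This avoids the case split, at the cost of invoking the (standard, but here unproved) quadratic stability criterion; your preliminary ``sign analysis'' ruling out further real roots in $[-1,\infty)$ is redundant and, as written, only covers $[1,\infty)$, but nothing downstream depends on it. The more substantive difference is that you correctly isolate the boundary case $h_1=h_2=h_3$, where $U(z)=(z+h_1+1)(z^2-z+1)$ and the two remaining roots lie \emph{on} the unit circle. The paper's own proof glosses over exactly this: its claim that $U(-(h_1+1))>0$ is in fact an equality there (the factor $-h_1^2-2h_1+h_3h_1+h_3+h_2$ vanishes when $h_1=h_2=h_3$), so the strict containment fails and ``unit disc'' must be read as closed --- harmless for the later Salem construction, as you observe, since the cyclotomic factor $z^2-z+1$ also divides the companion polynomial $V$ and is absorbed. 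Flagging that degenerate case is a small but genuine improvement over the paper's argument.
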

\begin{proof}
Notice that the polynomial $U(z)$ computed at $-(h_1+1)$ is certainly positive:
  $$
  U(-(h_1+1))=(h_1+1)(-h_1^2-2h_1+h_3h_1+h_3+h_2)>0
  $$
 then a real root $\alpha$ must be smaller than $-(h_1+1)$.

 Let $\beta, \gamma$ the other two roots. Then $|\alpha \beta \gamma|=h_1+1$ so that
 \begin{equation}\label{inequality}
   |\beta \gamma|=\frac{h_1+1}{|\alpha|}<1.
 \end{equation}

Suppose that $\gamma=\bar{\beta}$,  hence $|\beta|^2=\frac{h_1+1}{|\alpha|}<1$.
Next, suppose $\beta$ and $\gamma$ are real and   $|\beta|< |\gamma|$. From \eqref{inequality}, it follows that $-1<\beta<1$. Notice also that $U(-1)=h_1+h_2+h_3>0$. So $\beta<\gamma$.

Now,  $U(1)=2+h_1-h_2+h_3>0$, therefore  $\beta<\gamma<1$. In all cases $\beta, \gamma$ are in the unit disc and so we have the desired conclusion.
\end{proof}
\begin{lem}\label{unit}
  Set  $$U(z)=z^3+h_3z^{2}-h_2z+h_1+1$$ and $$V(z)=\frac{1-h_3+(h_2-h_1)z
    +(h_2-h_1)z^{2}+(1-h_3)z^3}{2},$$ then on the unit circle we have
        $ |  U (z) |^2 \geq | V (z) |^2$ .
\end{lem}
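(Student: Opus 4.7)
The plan is to compute $|U(z)|^2 - |V(z)|^2$ on the unit circle directly, in the spirit of the analogous two-parameter lemma. I would first factor $V$. Using $1+z^3 = (1+z)(1-z+z^2)$ and $z+z^2 = z(1+z)$, one obtains
\begin{equation*}
V(z) = \tfrac{1+z}{2}\bigl[(1-h_3)(1+z^2) + (h_2-h_1-1+h_3)z\bigr];
\end{equation*}
the bracketed palindromic quadratic, divided by $z$, equals on the unit circle the real linear function $T(a) := 2(1-h_3)a + (h_2-h_1-1+h_3)$, where $a = \cos\theta$. Hence $V(z) = \tfrac{T(a)}{2}\,z(1+z)$ and $|V(z)|^2 = \tfrac{1+a}{2}T(a)^2$.

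Next I would decompose $U(z) = U_s(z) + U_a(z)$ into palindromic and antipalindromic parts under $U(z) \mapsto z^3 U(1/z)$. A short computation gives $U_s = \tfrac{1+z}{2}P(z)$ and $U_a = \tfrac{1-z}{2}Q(z)$, with palindromic quadratics
\begin{equation*}
P(z) = (h_1+2)(1+z^2) + (h_3-h_2-h_1-2)z, \qquad Q(z) = h_1(1+z^2) + (h_1-h_2-h_3)z.
\end{equation*}
On the unit circle, $P(z)/z = P_1(a) := 2(h_1+2)a + (h_3-h_2-h_1-2)$ and $Q(z)/z = Q_1(a) := 2h_1 a + (h_1-h_2-h_3)$ are real. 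Since $(1+z)(1-\bar z) = z - \bar z$ is purely imaginary while $P(z)\overline{Q(z)} = P_1(a)Q_1(a)$ is real, the cross term $\operatorname{Re}(U_s\overline{U_a})$ vanishes, yielding
\begin{equation*}
|U|^2 = |U_s|^2 + |U_a|^2 = \tfrac{1+a}{2}P_1(a)^2 + \tfrac{1-a}{2}Q_1(a)^2.
\end{equation*}
Subtracting gives
\begin{equation*}
2\bigl(|U|^2 - |V|^2\bigr) = (1+a)\bigl[P_1(a) - T(a)\bigr]\bigl[P_1(a) + T(a)\bigr] + (1-a)Q_1(a)^2,
\end{equation*}
a cubic polynomial in $a$.

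At $a = -1$ this reduces to $2Q_1(-1)^2 = 2(h_1+h_2+h_3)^2 > 0$; at $a = 1$ it reduces to $6\bigl(2(h_1+h_3-h_2)+1\bigr) > 0$ by the hypothesis $h_3 \geq h_2$. In the interior where $|P_1(a)| \geq |T(a)|$, both summands are manifestly nonnegative. The hard part will be the complementary region where $|P_1(a)| < |T(a)|$: one must show that the nonnegative contribution $(1-a)Q_1(a)^2$ dominates the negative first summand. Using the explicit forms $P_1 - T = 2(h_1+h_3+1)a - (2h_2+1)$ and $P_1 + T = 2(h_1+3-h_3)a + (2h_3-2h_1-3)$, this reduces to checking that the above cubic does not change sign on $[-1, 1]$; the argument exploits $1 < h_1 \leq h_2 \leq h_3$ together with the nonnegativity at the endpoints to rule out interior zero crossings.
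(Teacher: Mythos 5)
Your reduction is correct and in fact arrives, by a cleaner route, at essentially the same object the paper works with: your quantity $2\bigl(|U|^2-|V|^2\bigr)=(1+a)\bigl(P_1(a)^2-T(a)^2\bigr)+(1-a)Q_1(a)^2$ is exactly the paper's $2f(x)$ (a cubic in $x=\operatorname{Re}z$), and I verified your factorization of $V$, the palindromic/antipalindromic splitting of $U$, the vanishing of the cross term, and the endpoint values. The problem is that the proof stops precisely where the real work begins. Saying that the cubic is nonnegative at $a=\pm1$ and that one can ``rule out interior zero crossings'' from this is not an argument: a cubic that is positive at both endpoints of $[-1,1]$ can perfectly well have two roots inside and be negative between them, and nothing you have written excludes this uniformly in $(h_1,h_2,h_3)$. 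The region where $|P_1(a)|<|T(a)|$ is nonempty for typical parameter values (e.g.\ near $a=\tfrac12$ for $h_1$ small and $h_2,h_3$ large the first summand is genuinely negative and is only barely compensated by $(1-a)Q_1(a)^2$), so the inequality is delicate, not a formality.

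For comparison, the paper's entire proof is devoted to exactly this missing step. It writes $2f(x)$ as $(1-x)\bigl(h_1+h_2-(1+2x)h_3+\alpha(x)\bigr)^2$ plus a remainder that is linear in $h_1,h_2,h_3$, then partitions $[-1,1]$ into five subintervals, choosing a different auxiliary function $\alpha(x)$ on each (e.g.\ $\alpha=1-2x$ on $[-\tfrac12,0]$, $\alpha=2-4x$ on $[0,\tfrac14]$, $\alpha=\pm(3-6x)$ beyond) and using the ordering $h_1\le h_2\le h_3$ to absorb the negative remainder coefficients. You would need to supply an argument of comparable substance --- either a case analysis like the paper's, or a genuinely new idea showing $(1-a)Q_1(a)^2\ge (1+a)\bigl(T(a)^2-P_1(a)^2\bigr)$ on the bad region --- before this can count as a proof.
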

\begin{proof}
     Setting $z = x + iy$,
on the unit circle we have
\begin{equation*}
  \begin{split}
    | U (z) |^2 =
 a (x) = (x^3 - 3 x (1 - x^2) + h_3 (2 x^2 - 1)\\
  -
       h_2 x + h_1 + 1)^2 + (1 - x^2) (4 x^2  + 2 h_3 x -
        h_2- 1 )^2
  \end{split}
\end{equation*}
         and
 \begin{equation*}
   \begin{split}
     | V (z) |^2 =   b (x) = \frac 14(((h_3 - 1) (4 x^3 - 3 x + 1) + (h_1 -h_2) (2 x^2 + x - 1))^2\\ + (1 -
           x^2) ((h_3 - 1) (4 x^2 - 1) + (h_1 - h_2) (2 x + 1))^2)
   \end{split}
 \end{equation*}
  Moreover, setting  $f(x)=a(x)-b(x)$ we must show that  $f(x)\geq 0$ for $-1\leq x\leq 1$.
    It turns out that  $f(x)$ is a third degree polynomial function,  precisely
    \begin{equation*}
      \begin{split}
       f(x)=(-2 h_3^2+4 h_3+8 h_1+6)x^3+2 \left(h_2 \left(h_3-3\right)+2
   h_3+h_1 \left(h_3+1\right)\right)x^2\\
   +\frac{1}{2}
   \left(-h_1^2-2 \left(h_2+h_3+5\right) h_1-h_2^2+3 h_3^2-2 h_3-2 h_2 \left(h_3+3\right)-9\right)x\\
   +\frac{1}{2} \left(h_1^2+2 \left(h_2-h_3+1\right) h_1+h_2^2+h_3^2-2 h_2 \left(h_3-3\right)-2 h_3+3\right).
      \end{split}
    \end{equation*}
    Reordering the expression according to the powers of the parameters $h_i$, we get
    \begin{equation*}
      \begin{split}
       f(x)= \frac 12(12x^3-9x+3)+(8x^3+2x^2-5x+1)h_1+(-6x^2-3x+3)h_2\\
        +(4x^3+4x^2-x-1)h_3
        +(-x+1)h_1h_2+(2x^2-x-1)h_1h_3\\
        +(2x^2-x-1)h_2h_3
        +\frac 12 (-x+1)h_1^2+\frac 12 (-x+1)h_2^2
        +\frac 12(-4x^3+3x+1)h_3^2.
      \end{split}
    \end{equation*}
Now consider $2f(x)$. One  has:
$$
2f(x)=(1-x)(h_1+h_2-(1+2x)h_3)^2+A_0(x)+A_1(x)h_1+A_2(x)h_2+A_3(x)h_3
$$
where we set  $A_0(x)=3-9x+12x^3$, $A_1(x)=2(1-5x+2x^2+8x^3)$, $A_2(x)=2(3-3x-6x^2)$, $A_3(x)=-2(1-x-2x^2)(1+2x)$.
Since in the interval $[-1,-\frac 12]$ one has $A_i(x)\geq 0, i=0,1,2,3$, in this same interval $2f\geq 0$.
One cannot say the same thing in the interval $[-\frac 12, 1]$. The structure of the functions  $A_i(x)$ suggests to introduce in the interval $[-\frac 12, 1]$ an auxiliary function $\alpha(x)$ and to decompose $2f$ in the following fashion as sum of  positive quantities.
\begin{equation}\label{somma}
  \begin{split}
    (1-x)(h_1+h_2-(1+2x)h_3+\alpha(x))^2\\
    +R_0(x)+R_1(x)h_1+R_2(x)h_2+R_3(x)h_3
  \end{split}
\end{equation}
where we set  $R_0(x)=A_0(x)-(1-x)\alpha(x)^2$, $R_1(x)=A_1(x)-2(1-x)\alpha(x)$, $R_2(x)=A_2(x)-2(1-x)\alpha(x)$, $R_3(x)=A_3(x)+2(1-x)(1+2x)\alpha(x)$.

Our task is to decompose the interval $[-\frac 12 , 1]$ in subintervals where for each subinterval we choose a suitable, possibly different, $\alpha(x)$ such that the sum
$$
R(x)=R_0(x)+R_1(x)h_1+R_2(x)h_2+R_3(x)h_3
$$
 is greater than or equal to zero.

In the interval $[-\frac 12 ,0]$, by choosing $\alpha(x)=1-2x$, we have  $R_i(x)\geq 0, i=0,1,2,3$ and therefore $R(x)\geq 0$.

In the interval $[0,\frac 17]$, we choose $\alpha(x)=2-4x$, and we have $R_0(x)\leq0$,  $R_1(x)<0$, $R_2(x)>0$, $R_3(x)>0$ so that
\begin{equation*}
  \begin{split}
    R(x)=R_0(x)+R_1(x)h_1+R_2(x)h_2+R_3(x)h_3\\
     \geq(R_0(x)+R_1(x)+R_2(x)+R_3(x))h_1
  \end{split}
\end{equation*}
It follows that $R(x)\geq 0$
since in $[0,1/7]$ the function $R_0(x)+R_1(x)+R_2(x)+R_3(x)$ is nonnegative.

In the interval $[\frac 17,\frac 14]$, we choose again  $\alpha(x)=2-4x$. Now we have  $R_0(x)>0$, $R_1(x)<0$, $R_2(x)>0, R_3(x)>0$, and $R_1(x)+R_2(x)+R_3(x)\geq 0$. Hence
\begin{equation*}
  \begin{split}
    R(x)=R_0(x)+R_1(x)h_1+R_2(x)h_2+R_3(x)h_3\geq\\
      R_0(x)+(R_1(x)+R_2(x)+R_3(x))h_1\geq 0.
  \end{split}
\end{equation*}

Next, in the interval $[\frac 14,\frac 12]$, we choose $\alpha(x)=3-6x$, and we have  $R_0(x)\leq 0,  R_1(x)\leq 0, R_2(x)\geq0, R_3(x)>0$
\begin{equation*}
  \begin{split}
    R(x)=R_0(x)+R_1(x)h_1+R_2(x)h_2+R_3(x)h_3\geq\\
       (R_0(x)+R_1(x)+R_2(x)+R_3(x))h_1\geq 0
  \end{split}
\end{equation*}
since, in $[\frac 14,\frac 12]$, $R_0(x)+R_1(x)+R_2(x)+R_3(x)\geq 0$.

Finally, in the interval $[\frac 12,1]$, by choosing $\alpha(x)=-3+6x$, we have $R_0(x)\geq 0,  R_1(x)> 0, R_2(x)\leq0, R_3(x)\geq 0$, $R_2(x)+R_3(x)\geq0$.
\begin{equation*}
  \begin{split}
    R(x)=R_0(x)+R_1(x)h_1+R_2(x)h_2+R_3(x)h_3\geq\\
        R_0(x)+R_1(x)h_1+(R_2(x)+R_3(x))h_2\geq 0.
  \end{split}
\end{equation*}

\end{proof}

Lemma \ref{minimal} and Lemma \ref{unit} imply that $Q(z)$ has a real root $<-1$ and the others in the unit disc. Therefore  by mimicking the case of two parameters, we have

\begin{thm}
For every triple of integers $(h_1,h_2,h_3)$, with $0<h_1\le h_2\le h_3$,  the polynomials $P^{(n)}_{(h_1,h_2,h_3)}(x)$ of the family with   Chebyshev coordinates $$[1,-h_1,h_2,-h_3,h_1,-h_2,h_3,\ldots,-h_1,h_2,-h_3,h_1,-h_2,h_3,1]$$ are hyperbolic, with all roots in the interval $[-2,2]$, except for one, which we denote  $x_m$ and which is less than $-2$; moreover $\displaystyle\lim_{n\to \infty} x_m=z_0+z_0^{-1}$, where $z_0$ is the smallest real root of the polynomial  $z^3+h_3z^{2}-h_2z+h_1+1$, while the largest root $x_M$ approaches $2$.
\end{thm}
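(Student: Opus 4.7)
The plan is to reproduce, \emph{mutatis mutandis}, the argument already carried out for the two-parameter family, since the hard analytic work has been relegated to Lemma \ref{minimal} and Lemma \ref{unit}. The strategy has three steps: use Rouché to locate the roots of $Q(z)$, apply Salem's classical construction to the identity \eqref{Salem}, and finally translate back through $x = z + z^{-1}$.

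First I would combine the two lemmas via Rouché's theorem applied to the decomposition $Q(z) = z^{6n+1} U(z) + V(z)$. On the unit circle, Lemma \ref{unit} gives $|z^{6n+1}U(z)| = |U(z)| \geq |V(z)|$, and this is strict away from at most finitely many points (one checks that the equality set in Lemma \ref{unit} is finite), so after inflating the circle to radius $1+\varepsilon$ and letting $\varepsilon \to 0^+$, Rouché yields that $Q$ has exactly as many zeros inside the open unit disc as $z^{6n+1}U(z)$. By Lemma \ref{minimal} the latter contributes $6n+1$ zeros at the origin plus the two zeros of $U$ strictly inside the disc, for a total of $6n+3$. Since $\deg Q = 6n+4$, there is exactly one zero outside; a continuity argument in $n$, together with the fact that the outside root $\alpha$ of $U$ is real and less than $-1$, forces this extra zero of $Q$ to be real, less than $-1$, and to converge to $z_0 = \alpha$ as $n \to \infty$. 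Thus $Q$ is, in the extended (possibly non-integer) sense of the two-parameter discussion, the minimal polynomial of the opposite of a Pisot number.

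Next, feeding this Pisot-like behaviour of $Q$ into \eqref{Salem}, Salem's standard construction (the same reference used in the two-parameter proof) tells us that $z^{6n+1}(z^3+1)\widetilde{P}^{(n)}_{(h_1,h_2,h_3)}(z)$, which does have integer coefficients even though $Q$ may not, is the minimal polynomial of the opposite of a Salem number $\zeta_n$, multiplied by some cyclotomic factors; here $\zeta_n < -1$ with $\zeta_n \to z_0$, its reciprocal $\zeta_n^{-1} \to z_0^{-1}$, and all remaining roots lie on the unit circle and become dense there as $n \to \infty$.

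Finally I would translate back to $x = z + z^{-1}$ exactly as in the earlier theorems: the conjugate pairs $(z,1/z)$ on the unit circle produce real $x \in [-2,2]$ (dense as $n \to \infty$, forcing $x_M \to 2$), the real pair $(\zeta_n, 1/\zeta_n)$ produces the single outside root $x_m = \zeta_n + \zeta_n^{-1} < -2$ converging to $z_0 + z_0^{-1}$, and the spurious factor $z^3+1$, whose roots $-1, e^{\pm i\pi/3}$ lie on the unit circle, simply contributes to the cyclotomic factors already accounted for. This yields the stated hyperbolicity, the location $x_m < -2$, and both limits. The only real obstacle in the whole theorem was the estimate in Lemma \ref{unit}; once both lemmas are in hand, the present proof is bookkeeping and the Rouché/Salem machinery transports the two-parameter template verbatim to the three-parameter setting.
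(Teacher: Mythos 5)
Your proposal is correct and follows essentially the same route as the paper: the paper's own proof consists precisely of invoking Lemma \ref{minimal} and Lemma \ref{unit} to conclude that $Q(z)$ has one real root less than $-1$ and the rest in the unit disc, and then ``mimicking the case of two parameters'' (Rouch\'e, Salem's construction applied to \eqref{Salem}, translation back via $x=z+z^{-1}$). You have simply spelled out the bookkeeping that the paper leaves implicit.
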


\begin{rem}
  It can be shown that the limit of the span, which is $2-z_0-z_0^{-1}$, is the largest root of
  the resultant with respect to $z$ of $z^3+h_3z^{2}-h_2z+h_1+1$ and $(x-(2-z-z^{-1}))z$:
  \begin{equation*}
    \begin{split}
      -\left(h_1+1\right) x^3+\left[6 \left(h_1+1\right)-h_2+h_1 h_3+h_3\right]
   x^2\\
    -\left[9 \left(h_1+1\right)-h_1 h_2-h_2-h_2 h_3+h_3-4 \left(h_2-h_1
   h_3-h_3\right)\right] x\\
   +\left(h_1-h_2+h_3+2\right){}^2
    \end{split}
  \end{equation*}

\end{rem}

\begin{rem}
  Elementary calculations also show that $\displaystyle\lim_{n\to \infty} x_m$ is the negative root of the polynomial
$$\left(h_1+1\right) x^3+\left(\left(h_1+1\right) h_3-h_2\right)
   x^2$$
$$-\left(\left(h_1+1\right) \left(h_2+3\right)+\left(h_2-1\right) h_3\right)
   x$$
$$+\left(h_2+1\right){}^2+\left(h_1-h_3+1\right){}^2$$
\end{rem}

\end{document}